\newtheorem{theorem}{Theorem}[section]
\newtheorem{lemma}[theorem]{Lemma}
\newdefinition{definition}[theorem]{Definition}
\newdefinition{remark}[theorem]{Remark}
\newdefinition{example}[theorem]{Example}
\begin{document}

\title{Some solutions of  functional equation $I(I(y,x), I(x,y))=I(x,y)$ involving fuzzy implications }
\author{Nageswara Rao Vemuri}
\ead{nrvemuriz@uohyd.ac.in}

\address{School of Mathematics and Statistics\\ University of Hyderabad \\
Hyderabad - 500 046, INDIA}

\begin{frontmatter}
	
\begin{abstract}
In this article, a functional equation(IE) involving fuzzy implications has been considered. Two different perspectives of this equation have been provided to realize its significance. As it is very difficult to find the solutions of (IE) in general, the investigation of solutions of (IE) is restricted to main families of fuzzy implications.
\begin{keyword}
Fuzzy implications, iterative equation, (S,N)-implication, R-implication, QL-implication.
 \\
 
{\em MSC 2010:\/} Primary: 20M32 ; Secondary: 03B52. 

\end{keyword}

\end{abstract}

\end{frontmatter}
\section{Introduction}
Fuzzy implications are a generalization of classical implication $\rightarrow$ from $\{0,1\}$ setting to the multivalued(MV) setting $[0,1]$.
They are defined as follows.
\begin{definition}[\cite{Baczynski_Jayaram_2008}, Definition 1.1.1 \& \cite{kitainik_1993, Fodor_Roubens_1994}]
\label{def_FI}
A function $I\colon [0,1]^2\to [0,1]$ is called a \emph{fuzzy implication} if it satisfies, for all $x,x_1,x_2,y,y_1,y_2\in[0,1]$, the following conditions:
\begin{gather}
    \label{eqn:I1}
    \text{if } x_1 \leq x_2, \text{ then } I(x_1,y) \geq I(x_2,y) \; , \text{ i.e., } I(\:\cdot\:,y) \text{ is decreasing\;,}
    \tag{I1}
\\
    \label{eqn:I2}
     \text{if } y_1 \leq y_2, \text{ then } I(x,y_1) \leq I(x,y_2) \; , \text{ i.e., } I(x,\:\cdot\:) \text{ is increasing\;,}
     \tag{I2}
\\
    \label{eqn:I3}
    I(0,0) =1 \; ,I(1,1)=1 \ , I(1,0)=0 \ .
    \tag{I3}
\end{gather}
The set of fuzzy implications will be denoted by $\mathbb{I}$.
\end{definition}
 Fuzzy implications play a significant role in the areas like, artificial intelligence, machine learning, decison making etc. The suitability of a fuzzy implication for a particular application/ problem will be decided by the properties or functional equations that the particular fuzzy implication satisfies.

Recall that the properties or functional equations of fuzzy implications are a mere generalization of classical tautologies to MV-setting and they involve some other fuzzy logic connectives also. However, functional equations of fuzzy implications are not only a generalizations of classical tautologies but may have also some mathematical perspectives. Thus the investigations of solutions of functional equations of fuzzy implications not only give fuzzy implications suitable for a specific application but also lead to glean mathematical aspects of the set of fuzzy implications.

Unfortunately, only few works have done on the investigations of solutions of functional equations of fuzzy implications so far, please see, \cite{Shi_Ruan_Kerre_2007}, \cite{Vemuri_Jayaram_2014a}, \cite{Vemuri_2015}.

In this article, we consider the following functional equation, for a fuzzy implication $I$,
\begin{equation}
\label{eqn:IE}
\tag{IE}
I(I(y,x),I(x,y))= I(x,y)
\end{equation}

\subsection{IE: Interpretation}
The functional equation \eqref{eqn:IE} has the following perspectives.
\begin{description}
\item[(A). Logical Perspective:]In the classical logic, the formula 
\begin{equation*}
((q \rightarrow p) \rightarrow (p \rightarrow q))\equiv (p\rightarrow q)
\end{equation*} where $\rightarrow$ is a classical implication, is always a tautology. This tautology, when generalized to fuzzy logic,  it can be represented by
\begin{equation}
\label{eqn:FE}
\tag{GFE}
I_{1}(I_{2}(y,x),I_{3}(x,y))= I_{4}(x,y)
\end{equation}
where $I_{1}, I_{2}, I_{3}$ and $I_{4}$ are fuzzy implications defined on $[0,1]$ and need not be the same always.
Note that it is never possible to find fuzzy implications $I_{1}, I_{2}, I_{3}$ and $I_{4}$ satisfying  \eqref{eqn:IE} simultaneously. Due to this reason, in this article, we restrict ourselves to the case of $I_{1}=I_{2}=I_{3}=I_{4}$, which gives us \eqref{eqn:FE}.
\item [(B). Algebraic Perspective:] In \cite{Vemuri_Jayaram_2012}, Vemuri and Jayaram proposed a generating method $\triangledown$ of fuzzy implications as follows.
\begin{definition} 
	Let $I,J \in \mathbb{I}$. Define the function $(I \triangledown J)$ as
\begin{equation*}
	(I \triangledown J)(x,y)=I(J(y,x),J(x,y)), \qquad x,y \in [0,1] \; . 
\end{equation*}
\end{definition}
Further, they have shown that $(I \triangledown J)$ is always a fuzzy implication and also, the pair $(\mathbb{I}, \triangledown)$ is a semigroup. The set of idempotent elements of the semigroup $(\mathbb{I}, \triangledown)$ is exactly the collection of fuzzy implications satisfying the functional equation \eqref{eqn:IE}. It is already known that the set of idempotent elements of a semigroup will glean more properties of that semigroup. 
\end{description}
Due to the perspectives of \eqref{eqn:IE}, it is always useful to investigate the solutions of \eqref{eqn:IE}.
\subsection{Objectives of this work:}
In this article, our objectives are as follows:
\begin{enumerate}
	\item To investigate the relation between the properties, functional equations and \eqref{eqn:IE}.
	\item To investigate the fuzzy implications satisfying \eqref{eqn:IE}.
\end{enumerate}
Here, in the following, we recall the basic properties of fuzzy implications which will be useful in the sequel.
\begin{definition}[cf.~\cite{Baczynski_Jayaram_2008}, Definition 1.3.1]
\label{def_basic_prop_imp}
\begin{itemize}
  \item A fuzzy implication $I$ is said to satisfy 
 \begin{enumerate}
 \renewcommand{\labelenumi}{(\roman{enumi})}
\item 
	the left neutrality property \eqref{eqn:NP} if 
\begin{align}
\tag{NP}	I(1,y)=y,  \qquad y \in [0,1] \; . \label{eqn:NP}
\end{align}
\item the ordering property \eqref{eqn:OP}, if 
\begin{align}
\tag{OP}  x \leq y \Longleftrightarrow I(x,y)=1 \; \ \qquad \ x,y \in [0,1] \ . \label{eqn:OP}
\end{align}
\item the identity principle \eqref{eqn:IP}, if 
\begin{align}
\tag{IP} I(x,x)=1,  \qquad  x \in [0,1] \; . \label{eqn:IP}
\end{align}
\item the exchange principle \eqref{eqn:EP}, if 
\begin{align}
\tag{EP} I(x,I(y,z))=I(y,I(x,z)),  \qquad x,y,z \in [0,1] \; . \label{eqn:EP}
\end{align}
\end{enumerate}
	\item A fuzzy implication $I$ is said to be continuous if it is continuous in both the variables. 
\end{itemize}
\end{definition}
\section{Some solutions of \eqref{eqn:IE}}
Due to the variety of fuzzy implications, it is very difficult to find the fuzzy implications that satisfy \eqref{eqn:IE}. Hence, in this article, we restrict the solutions of \eqref{eqn:IE} to major families of fuzzy implications only. The more details about the t-norms, t-conorms and fuzzy negations can be found in \cite{Klement_Mesiar_Pap_2000} and \cite{Baczynski_Jayaram_2008}.
\begin{theorem}
\label{thm:OP_FE}
Let $I$ has (OP). Then  the following statements are equivalent:

\begin{enumerate}
 \renewcommand{\labelenumi}{(\roman{enumi})}
\item $I$ satisfies the equation (\ref{eqn:IE}).
\item $I$ has (NP) on its range.
\end{enumerate}
\end{theorem}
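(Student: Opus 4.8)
The plan is to exploit (OP) to collapse the ternary identity \eqref{eqn:IE} into a condition that lives entirely on the range of $I$. First I would record the elementary consequences of (OP). Since $z\le 1$ for every $z\in[0,1]$, (OP) gives $I(z,1)=1$; for any pair with $x\le y$ it gives $I(x,y)=1$; and for any pair with $x>y$ it gives both $I(x,y)\ne 1$ and $I(y,x)=1$ (because $y<x$ forces $y\le x$). In particular the range $\mathrm{Ran}(I)=\{I(x,y):x,y\in[0,1]\}$ is exactly $\{1\}\cup\{I(x,y):x>y\}$, and every value of $I$ other than $1$ arises as $I(x,y)$ for some strict pair $x>y$.

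Next I would run a case analysis on $(x,y)$ to simplify the left-hand side of \eqref{eqn:IE}. When $x\le y$, both sides collapse to $1$: the right-hand side is $I(x,y)=1$, while the left-hand side is $I(I(y,x),1)=1$ using $I(\,\cdot\,,1)\equiv 1$; hence \eqref{eqn:IE} holds unconditionally on this region and carries no information. When $x>y$, the inner term $I(y,x)$ equals $1$, so \eqref{eqn:IE} becomes $I(1,I(x,y))=I(x,y)$. Thus \eqref{eqn:IE} holds for all $x,y$ if and only if $I(1,I(x,y))=I(x,y)$ holds for all $x>y$.

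The final step is to match this reduced condition with statement (ii). By the range characterization, requiring $I(1,z)=z$ for every $z\in\mathrm{Ran}(I)$ amounts to requiring it for $z=1$ (automatic, since $I(1,1)=1$ by \eqref{eqn:I3}) together with $z=I(x,y)$ for each $x>y$, and the latter is precisely the reduced identity above. This yields both implications at once. For (i)$\Rightarrow$(ii) I would take an arbitrary $z\in\mathrm{Ran}(I)$, dispose of $z=1$, write $z=I(x,y)$ with $x>y$, and read off $I(1,z)=z$ directly from \eqref{eqn:IE}. For (ii)$\Rightarrow$(i) I would carry out the case analysis above and invoke (NP) on the range at the point $I(x,y)\in\mathrm{Ran}(I)$.

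I do not anticipate a serious obstacle, since (OP) does almost all the work of trivializing the inner arguments $I(x,y)$ and $I(y,x)$. The one point requiring care is the bookkeeping of the range: one must check that every non-unit value of $I$ genuinely comes from a strict pair $x>y$, so that the hypothesis of (ii) applies to exactly the values produced by the case analysis, and that $z=1$ contributes nothing, so that ``\eqref{eqn:NP} on its range'' is neither stronger nor weaker than the reduced identity.
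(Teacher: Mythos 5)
Your proposal is correct and follows essentially the same route as the paper's proof: both directions rest on the case split $x\le y$ versus $x>y$, using (OP) to force $I(y,x)=1$ in the strict case so that \eqref{eqn:IE} collapses to $I(1,I(x,y))=I(x,y)$, and then identifying this with \eqref{eqn:NP} restricted to the range. Your version is only slightly more explicit in bookkeeping the range (handling $z=1$ via \eqref{eqn:I3} and noting that every non-unit value of $I$ comes from a strict pair), but the underlying argument is the same.
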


\begin{proof}
\begin{enumerate}
 \renewcommand{\labelenumi}{(\roman{enumi})}
\item $\Rightarrow (ii)$ Let $I$ has (OP) and satisfies the equation (\ref{eqn:IE}). Let $\alpha \in range \ of \ I.$ Then there exist $x_{0},y_{0} \in [0,1]$ such that $I(x_{0},y_{0})=\alpha$.
If $\alpha=1, $  then it is clear. Let $\alpha < 1.$ Since $ I$ has (OP) and $\alpha <1,$ we have $x_{0}> y_{0}$  and $I(y_{0},x_{0})=1$.
Now $I(1, \alpha)=I(I(y_{0},x_{0}),I(x_{0},y_{0}))=I(x_{0},y_{0})= \alpha.$ Thus $I$ has (NP) on the range.

 \item  $\Rightarrow (i)$ Let $I$ has (OP) and has (NP) on the range. If $ x \leq y,$ then $I(x,y)=1$ and $I(I(y,x),I(x,y))=I(I(y,x),1)=1.$ On the other hand let $x  >y$. Then 
$$I(I(y,x),I(x,y))=I(1,I(x,y))=I(x,y)$$ by our assumption. Thus $I$ satisfies the equation (\ref{eqn:IE}).
\end{enumerate}
\end{proof}

\subsection{R-implications and the equation (\ref{eqn:IE}).}
\begin{definition}[\cite{Baczynski_Jayaram_2008}, Definition 2.5.1]
\label{def_R_imp}
     A function $I : [0,1]^{2} \longrightarrow [0,1]$ is called an \emph{R- implication }if  there exists a t-norm $T$ such that 
		$$I(x,y)=\sup\{t \in [0,1]|T(x,t) \leq y \},  \qquad \ x,y \in [0,1].$$  
\end{definition} 

If $I$ is an $R$-implication generated from a t-norm $T$, then it is denoted by $I_{T}$ and $I_T\in\mathbb{I}$.
\begin{theorem}
\label{thm_FE_R_imp}
Let $I$ be an R-implication generated from a t-norm. Then $I$ satisfies the equation (\ref{eqn:IE}).
\end{theorem}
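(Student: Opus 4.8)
The plan is to verify \eqref{eqn:IE} directly from the residuation definition by a short case analysis on the order of $x$ and $y$, rather than appealing to Theorem~\ref{thm:OP_FE}. The reason for not simply invoking that theorem is the main subtlety here: an R-implication built from an \emph{arbitrary} t-norm need not be left-continuous, and consequently $I_T$ need not satisfy the full ordering property \eqref{eqn:OP}; only the forward implication $a\le b\Rightarrow I_T(a,b)=1$ is guaranteed. Thus the hypothesis of Theorem~\ref{thm:OP_FE} is not available in general, and I would instead isolate exactly the two ingredients that its proof actually uses.

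First I would record the two facts about $I_T$ that drive everything. (1) $I_T$ satisfies \eqref{eqn:NP}: from Definition~\ref{def_R_imp}, $I_T(1,y)=\sup\{t:T(1,t)\le y\}=\sup\{t:t\le y\}=y$. (2) $I_T$ satisfies the forward half of \eqref{eqn:OP}: if $a\le b$ then, since $T(a,1)=a\le b$, the value $t=1$ lies in the defining set, whence $I_T(a,b)=1$; note this needs no continuity assumption on $T$. I would also recall the elementary fact, valid for every fuzzy implication by \eqref{eqn:I1} and \eqref{eqn:I3}, that $I(z,1)=1$ for all $z$.

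With these in hand the verification splits into two cases. If $x\le y$, then $I(x,y)=1$ by (2), and the left-hand side of \eqref{eqn:IE} becomes $I(I(y,x),1)=1$, matching the right-hand side. If $x>y$, then $y\le x$ gives $I(y,x)=1$ by (2), so the left-hand side is $I(1,I(x,y))$, which equals $I(x,y)$ by \eqref{eqn:NP}; again this matches the right-hand side. In both cases \eqref{eqn:IE} holds.

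The computation itself is routine; the only genuine obstacle is conceptual, namely noticing that Theorem~\ref{thm:OP_FE} cannot be applied verbatim for an arbitrary t-norm, and recognizing that the argument nonetheless goes through because it relies only on \eqref{eqn:NP} together with the forward direction of \eqref{eqn:OP}, both of which hold for every R-implication. This essentially mirrors the $(ii)\Rightarrow(i)$ part of the proof of Theorem~\ref{thm:OP_FE}, specialized to the present setting.
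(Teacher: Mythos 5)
Your proposal is correct and follows essentially the same route as the paper: the paper also proves the result by the two-case analysis ($x\le y$ versus $x>y$), using \eqref{eqn:NP} together with the fact that $a\le b$ implies $I_T(a,b)=1$ (which the paper obtains from (IP) and monotonicity rather than from Theorem~\ref{thm:OP_FE}, exactly as you anticipated). The only cosmetic difference is that you verify \eqref{eqn:NP} and the forward half of \eqref{eqn:OP} directly from the residuation definition, whereas the paper cites them as known properties of R-implications.
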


\begin{proof}
 Let $I$ be an R-implication generated from a t-norm. Then $I$ has both (IP) and (NP). Since $I$ has (IP), for  $x \leq y$, we have $I(x,y)=1$. So,  on the one hand  if $x \leq y$, we have $I(x,y)=1$ and on the other hand $I(I(y,x),I(x,y))=I(I(y,x),1)=1.$ When $x >y$, again  we have  $I(I(y,x),I(x,y))=I(1,I(x,y))=I(x,y)$ by using (NP) of $I$. Thus it is shown  that every R-implication satisfies the equation (\ref{eqn:IE}).
\end{proof}

\subsection{(S,N)-implications and the equation (\ref{eqn:IE}).}
\begin{definition}[\cite{Baczynski_Jayaram_2008}, Definition 2.4.1]
\label{def_SN_imp}
     A function $I: [0,1]^{2} \longrightarrow [0,1]$ is called an \emph{(S,N)-implication} if there exist a t-conorm $S$ and a fuzzy negation $N$ such that 
		\begin{equation}
		\label{eqn:S,N-implication}
		I(x,y)=S(N(x),y),  \qquad \ x,y \in [0,1].
		\end{equation}
\end{definition}
If $I$ is an $(S,N)$-implication then we will often denote it by $I_{S,N}$.  The family of  $(S,N)$-implications will be denoted by $\mathbb{I_{S,N}}.$

 \begin{theorem}
 \label{thm_S,N imp cases_FE}
 Let $I$ be an (S,N)-implication defined from a t-conorm $S$ and a fuzzy negation $N$. Then in the following cases $I$ satisfies the equation (\ref{eqn:IE}).
 \begin{enumerate}
 \renewcommand{\labelenumi}{(\roman{enumi})} 
 \item $S=\max$  
\item $S=S_{D}$ 
 \item The pair $(S,N)$ satisfies $S(N(x),x)=1$, for all $x \in [0,1]$.
 \end{enumerate}
 \end{theorem}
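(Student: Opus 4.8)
The plan is to verify each of the three cases by checking the defining equation \eqref{eqn:IE} directly, exploiting the concrete form $I(x,y)=S(N(x),y)$ of an $(S,N)$-implication. For all three cases I would expand both sides of \eqref{eqn:IE} using this representation and reduce to an identity involving $S$ and $N$ alone. Writing $a=I(y,x)=S(N(y),x)$ and $b=I(x,y)=S(N(x),y)$, the left-hand side of \eqref{eqn:IE} becomes $I(a,b)=S(N(S(N(y),x)),\,S(N(x),y))$, and the goal in each case is to show this equals $b=S(N(x),y)$.

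For case (i), $S=\max$, I would proceed by a case analysis on the relative order of the quantities $N(x),N(y),x,y$. Here $a=\max(N(y),x)$ and $b=\max(N(x),y)$, and the left-hand side is $\max(N(a),b)$. The key observation is that $N(a)=N(\max(N(y),x))=\min(N(N(y)),N(x))\le N(x)\le b$ using the decreasing property of $N$ together with $N(N(y))\le$-type bounds, so that $\max(N(a),b)=b$; I expect the monotonicity of $N$ and the idempotence of $\max$ to make this routine once the inequality $N(a)\le b$ is pinned down. For case (iii), the hypothesis $S(N(x),x)=1$ for all $x$ does most of the work: since $a=S(N(y),x)\ge S(N(x),x)$ is not immediate, I would instead argue that the surjectivity-type condition forces $a$ to behave like $1$ in the relevant regime, reducing the left-hand side to $S(N(1),b)=S(0,b)=b$ via \eqref{eqn:NP}, which every $(S,N)$-implication enjoys. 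In fact the cleanest route for (iii) may be to invoke Theorem~\ref{thm:OP_FE} or the (NP)-on-range idea: when $a=1$ the left side collapses to $I(1,b)=b$ by neutrality, and the condition $S(N(x),x)=1$ is precisely what guarantees the complementary ordering case $I(y,x)=1$ behaves correctly.

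For case (ii), $S=S_{D}$ (the drastic t-conorm), I would again split according to whether the arguments equal the boundary values $0$ or $1$, since $S_{D}(u,v)$ equals $\max(u,v)$ only when one argument is $0$ and equals $1$ otherwise. The drastic conorm's highly discontinuous behaviour means the quantities $a$ and $b$ take values in a restricted set, and I would track these cases to confirm $I(a,b)=b$ throughout. I would structure the whole proof as three separate items matching the theorem's enumeration, handling (i) and (iii) first since they share the neutrality-and-monotonicity mechanism, and treating (ii) by the boundary case analysis peculiar to $S_{D}$.

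The main obstacle I anticipate is case (iii): unlike (i) and (ii), it is stated as an abstract condition on the pair $(S,N)$ rather than a concrete formula, so the argument must be purely structural. The difficulty is to show that $S(N(x),x)=1$ forces the inner term $I(y,x)=S(N(y),x)$ to equal $1$ exactly in the range $x>y$ where neutrality is needed, without any extra regularity on $S$ or $N$; the boundary and comparison cases where the hypothesis is applied at a shifted argument will require care. By contrast, cases (i) and (ii) reduce to finite case checks and should present no real resistance.
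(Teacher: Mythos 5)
Your handling of cases (i) and (ii) is correct and is essentially the paper's own approach: for (i) the paper splits into the cases $x \le N(y)$ and $x \ge N(y)$ and computes both ways, while your one-line version --- $N(\max(N(y),x)) = \min\bigl(N(N(y)),N(x)\bigr) \le N(x) \le \max(N(x),y)$, so the outer $\max$ returns $\max(N(x),y)=I(x,y)$ --- is the same computation, slightly streamlined. For (ii) the paper likewise just records the explicit form of $I_{S_D,N}$ and declares the finite case check easy, which is exactly what you propose.

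Case (iii) is where your proposal has a real gap, and ironically the step you flag as the ``main obstacle'' is immediate. You worry about showing that $S(N(x),x)=1$ forces $I(y,x)=S(N(y),x)=1$ in the regime $x>y$ ``without any extra regularity on $S$ or $N$''; but monotonicity, which $S$ and $N$ have by definition, suffices: if $y \le x$ then $N(y) \ge N(x)$, hence $S(N(y),x) \ge S(N(x),x) = 1$. The clean packaging --- and the paper's --- is to observe that the hypothesis $S(N(x),x)=1$ is literally the statement that $I$ satisfies (IP); together with \eqref{eqn:I1} this gives $I(u,v)=1$ whenever $u \le v$, and since every $(S,N)$-implication satisfies (NP), the two-case argument of Theorem~\ref{thm_FE_R_imp} for R-implications applies verbatim: if $x>y$ then $I(I(y,x),I(x,y)) = I(1,I(x,y)) = I(x,y)$, while if $x \le y$ then both sides equal $1$ because $I(x,y)=1$ and $I(\cdot\,,1)\equiv 1$ for any fuzzy implication. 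Note that you also omit this second case $x \le y$, in which the inner term $I(y,x)$ need not equal $1$, so the neutrality collapse alone does not cover it. Finally, your alternative idea of invoking Theorem~\ref{thm:OP_FE} does not work: that theorem assumes (OP), and the hypothesis of (iii) delivers only (IP), which is strictly weaker --- for instance the Weber implication $I_{WB}$, i.e.\ the $(S,N)$-implication generated from $S=\max$ and $N=N_{D_{2}}$, satisfies $S(N(x),x)=1$ for all $x$ yet fails (OP).
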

 
 \begin{proof}
 Let $I$ be an (S,N)-implication defined from a t-conorm $S$ and a fuzzy negation $N$.
 \begin{enumerate}
 \renewcommand{\labelenumi}{(\roman{enumi})}
 \item Let $S(x,y)= \max(x,y)$, i.e., $S$ is maximum t-conorm. Then $I$ is given by $I(x,y)= \max(N(x),y)$, for all $x,y \in [0,1].$
 Let $x,y \in [0,1]$ be fixed arbitrarily. Then we have two cases: $x \leq N(y), x \geq N(y). $
\begin{enumerate}
 \renewcommand{\labelenumi}{(\roman{enumi})}
 \item Let $x \leq N(y).$ Then $N(x) \geq N(N(y)).$ Now, 
 \begin{align*}
 I(I(y,x),I(x,y))&= I(\max(N(y),x), \max(N(x),y))\\
 &= I(N(y), \max(N(x),y))\\
 &= \max(N(N(y)), \max(N(x),y))\\
 &=\max(\max(N(x), N(N(y))),y)\\
 &=\max(N(x),y)=I(x,y)
 \end{align*}
 Thus in this case $I$ sayisfies the equation (\ref{eqn:IE}).
 \item Let $x \geq N(y).$ Now, 
  \begin{align*}
 I(I(y,x),I(x,y))&= I(\max(N(y),x), \max(N(x),y))\\
 &= I(x, \max(N(x),y))\\
 &= \max(N(x), \max(N(x),y))\\
 &=\max(N(x),y)=I(x,y)
 \end{align*}
 Thus in this case also $I$ satisfies the equation (\ref{eqn:IE}).
\end{enumerate}
\item Let $S=S_{D}$, the drastic t-conorm. Then $I$ can be given by
$$I(x,y)= \begin{cases}
y, & \text{if $x=1$}\\
N(x), & \text{if $y=0$}\\
1,& \text{otherwise}
\end{cases}$$
Now, it is very easy to check that this (S,N)-implication $I$ satisfies the equation (\ref{eqn:IE}).
\item Assume that the pair $(S,N)$ satisfies $S(N(x),x)=1$, for all $x \in [0,1]$. Then the $(S,N)$-implication satisfies \eqref{eqn:IP}. Since, every $(S,N)$-implication satisfies \eqref{eqn:NP}, it follows easily that $(S,N)$-implication satisfies \eqref{eqn:IE}.
 \end{enumerate} 
 \end{proof}

\subsection{QL-implications and the equation (\ref{eqn:IE}).}
\begin{definition}[\cite{Baczynski_Jayaram_2008}, Definition 2.5.1]
\label{def_SN_imp}
     A function $I: [0,1]^{2} \longrightarrow [0,1]$ is called an \emph{(QL)-operation} if there exist a t-norm $T$, t-conorm $S$ and a fuzzy negation $N$ such that 
		\begin{equation}
		\label{eqn:QL-implication}
		I(x,y)=S(N(x),T(x,y)),  \qquad \ x,y \in [0,1].
		\end{equation}
\end{definition}
If $I$ is a $QL$-implication then we will often denote it by $I_{T,S,N}$. 

\begin{lemma}
\label{lemma_QL_IP}
If $I$ is a QL-implication satisfying (IP), then it also satisfies the equation (\ref{eqn:IE}).
\end{lemma}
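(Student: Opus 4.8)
The plan is to reduce this statement to the very same argument that established Theorem~\ref{thm_FE_R_imp} for R-implications. The only genuinely QL-specific input I would need is that every QL-implication satisfies \eqref{eqn:NP}; everything else follows from the defining monotonicity and boundary axioms of a fuzzy implication together with the hypothesis \eqref{eqn:IP}. So my first step is to verify the neutrality property for $I=I_{T,S,N}$: since $N(1)=0$, since $1$ is the neutral element of the t-norm so that $T(1,y)=y$, and since $0$ is the neutral element of the t-conorm $S$, we get $I(1,y)=S(N(1),T(1,y))=S(0,y)=y$, i.e. $I$ has \eqref{eqn:NP}.

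Next I would extract two elementary consequences that make the case analysis work. First, combining \eqref{eqn:IP} with monotonicity \eqref{eqn:I2} in the second variable gives $I(x,y)\ge I(x,x)=1$ whenever $x\le y$, so that $x\le y$ implies $I(x,y)=1$. Second, from \eqref{eqn:I3} and the antitonicity \eqref{eqn:I1} in the first variable one has $I(z,1)\ge I(1,1)=1$ for every $z\in[0,1]$, hence $I(z,1)=1$ identically.

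With these in hand the verification of \eqref{eqn:IE} is a two-case computation, exactly mirroring the R-implication proof. If $x\le y$, then $I(x,y)=1$, and $I(I(y,x),I(x,y))=I(I(y,x),1)=1=I(x,y)$ by the second fact above. If $x>y$, then $y\le x$ forces $I(y,x)=1$ by the first fact, whence $I(I(y,x),I(x,y))=I(1,I(x,y))=I(x,y)$ by \eqref{eqn:NP}. This settles both cases and yields \eqref{eqn:IE}.

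I expect no real obstacle here: the entire content is the observation that any fuzzy implication possessing both \eqref{eqn:IP} and \eqref{eqn:NP} automatically satisfies \eqref{eqn:IE}, and a QL-implication supplies \eqref{eqn:NP} for free while \eqref{eqn:IP} is assumed. The only point demanding a moment's care is confirming \eqref{eqn:NP} for the QL form, which is immediate from the neutral elements of $T$ and $S$ together with $N(1)=0$. It is worth noting that \eqref{eqn:IP} is a nontrivial restriction on QL-implications, since not every QL-implication satisfies it, so the lemma is really a statement about the sub-family carved out by that hypothesis.
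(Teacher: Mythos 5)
Your proof is correct and follows essentially the same route as the paper: both arguments rest on the observation that a fuzzy implication with \eqref{eqn:NP} and \eqref{eqn:IP} satisfies \eqref{eqn:IE} via the two-case split $x\le y$ versus $x>y$. The paper merely compresses this by citing Theorem~\ref{thm:OP_FE} (whose (ii)$\Rightarrow$(i) direction is exactly your computation, and which in fact only uses the half of \eqref{eqn:OP} that \eqref{eqn:IP} supplies), whereas you inline that argument and additionally verify \eqref{eqn:NP} for the QL form explicitly, making your version self-contained and avoiding the slight mismatch in the paper's citation, since \eqref{eqn:IP} alone does not literally give the hypothesis \eqref{eqn:OP} of that theorem.
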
 

\begin{proof}
 It is known that  if $I$ is a QL-implication, then it satisfies (NP). Corollary \ref{thm:OP_FE} will imply that a QL-implication with (IP) will satisfy the equation (\ref{eqn:IE}). 
\end{proof}

\begin{theorem}
\label{QL-imp_FE}
Let $I$ be a QL-implication defined from a t-norm $T$, t-conorm $S$ and a fuzzy negation $N$. Then in the following cases, $I$ satisfies the equation (\ref{eqn:IE}).
\begin{enumerate}
\renewcommand{\labelenumi}{(\roman{enumi})}
\item $S$ is a positive t-conorm(This implies that $N=N_{D_{2}}$)
\item $T=T_m$.
\item $S=S_{D}, N$ is any non vanishing negation and $T$ is a positive t-norm.
\end{enumerate}
\end{theorem}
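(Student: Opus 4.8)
The plan is to prove all three cases by a single route: show that the QL-implication $I$ satisfies the identity principle \eqref{eqn:IP}, and then invoke Lemma \ref{lemma_QL_IP}, which already guarantees that a QL-implication obeying \eqref{eqn:IP} satisfies \eqref{eqn:IE}. So the whole theorem reduces to checking, in each case, that $I(x,x)=S(N(x),T(x,x))=1$ for every $x\in[0,1]$. First I would record one fact valid for \emph{every} QL-implication and used repeatedly below: since $1$ is the neutral element of $T$, $T(x,1)=x$, hence $I(x,1)=S(N(x),x)$; as $I$ is a fuzzy implication, $I(x,1)=1$, so $S(N(x),x)=1$ for all $x$. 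Because $T(x,x)\le T(x,1)=x$ and $S$ is increasing, this already yields $S(N(x),T(x,x))\le 1$; the content of each case is to force this inequality to an equality, i.e.\ to bridge $T(x,x)$ up to $x$.

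Case (ii) is then immediate: for $T=T_m=\min$ one has $T(x,x)=x$, so $I(x,x)=S(N(x),x)=1$ straight from the recorded fact, giving \eqref{eqn:IP}. Case (i) is nearly as quick: positivity of $S$ forces $N=N_{D_2}$ (the constraint already noted in the statement), and for this negation $N(x)=1$ whenever $x<1$; since $1$ is absorbing for the t-conorm $S$, this gives $I(x,x)=S(1,T(x,x))=1$ for $x<1$, while $I(1,1)=1$ holds by \eqref{eqn:I3}. Hence \eqref{eqn:IP} holds and Lemma \ref{lemma_QL_IP} applies.

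Case (iii) needs only a short case split. I would dispose of the endpoints $x\in\{0,1\}$ first, where $I(0,0)=I(1,1)=1$ is immediate from \eqref{eqn:I3}. For $x\in(0,1)$ each hypothesis is used exactly once: $N$ non-vanishing gives $N(x)>0$, and $T$ positive gives $T(x,x)>0$; since the drastic t-conorm returns $1$ as soon as both of its arguments are nonzero, $I(x,x)=S_D(N(x),T(x,x))=1$. Thus $I$ satisfies \eqref{eqn:IP} throughout $[0,1]$ and \eqref{eqn:IE} follows from Lemma \ref{lemma_QL_IP}.

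The main obstacle is not any individual computation --- each collapses to one line once \eqref{eqn:IP} is the declared target --- but rather making the two reductions watertight. In case (i) one must justify, or cite, that positivity of $S$ genuinely forces $N=N_{D_2}$ for a QL-\emph{implication} (and not merely a QL-operation); in case (iii) one must check that ``non-vanishing'' for $N$ and ``positive'' for $T$ are exactly what keep both arguments of $S_D$ off zero on the open interval $(0,1)$, with the endpoints handled separately. The genuinely substantive step --- turning \eqref{eqn:IP} together with \eqref{eqn:NP} into \eqref{eqn:IE} --- is already isolated in Lemma \ref{lemma_QL_IP}, so once \eqref{eqn:IP} is secured in each case there is nothing further to do.
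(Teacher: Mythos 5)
Your proposal is correct, and it takes a more unified route than the paper. The common core of both arguments is Lemma~\ref{lemma_QL_IP} (a QL-implication with \eqref{eqn:IP} satisfies \eqref{eqn:IE}), but the paper only argues through \eqref{eqn:IP} in case (ii), where it cites Proposition 2.6.3 of \cite{Baczynski_Jayaram_2008}; you obtain \eqref{eqn:IP} there in one line from your recorded fact $S(N(x),x)=1$, which you correctly derive from $I(x,1)\geq I(1,1)=1$ via \eqref{eqn:I1}. For the other two cases the paper proceeds by reduction to known implications rather than through \eqref{eqn:IP}: in (i) it cites Proposition 2.6.7 to get $N=N_{D_{2}}$ and then identifies $I$ with the Weber implication $I_{WB}$, asserting without argument that $I_{WB}$ satisfies \eqref{eqn:IE} (true, e.g.\ because $I_{WB}$ is an R-implication, so Theorem~\ref{thm_FE_R_imp} applies); in (iii) it says only that the claim ``follows easily'' from Proposition 2.6.21 of the same book, under which $I$ collapses to the $(S_D,N)$-implication already shown earlier in the paper to satisfy \eqref{eqn:IE}. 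You replace both reductions by direct verifications of \eqref{eqn:IP}: $S(1,\cdot)\equiv 1$ in case (i), and strict positivity of both arguments of $S_D$ on $(0,1)$ in case (iii), with endpoints handled by \eqref{eqn:I3}. This buys uniformity and self-containedness --- every case funnels through the same lemma, and the paper's unproved assertions become one-line computations; note also that the fact $N=N_{D_{2}}$, which you flag as needing a citation, actually follows from your own recorded fact, since $S(N(x),x)=1$ with $x<1$ and $S$ positive forces $N(x)=1$. What the paper's route buys in exchange is brevity and the structural information that the solutions in cases (i) and (iii) are precisely the familiar implications $I_{WB}$ and the $(S_D,N)$-implication, rather than merely being solutions of \eqref{eqn:IE}.
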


\begin{proof} Let $I$ be a QL-implication defined from a t-norm $T$, t-conorm $S$ and a fuzzy negation $N$.  
\begin{enumerate}
\renewcommand{\labelenumi}{(\roman{enumi})}
\item  Let  $S$ is a positive t-conorm. Then from Proposition 2.6.7 in \cite{Baczynski_Jayaram_2008}, we see that $N=N_{D_{2}}$. Thus in this case, $I=I_{WB}$, which satisfies the equation (\ref{eqn:IE}).
\item Let $I$ be a QL-implication. Then we know that the pair (S,N) satisfies (LEM). If $T=T_{m}$ from Proposition 2.6.3 in \cite{Baczynski_Jayaram_2008}, the QL-implication will have (IP). Already we know that every QL-implication has (NP). Thus $I$ satisfies the equation (\ref{eqn:IE}).
\item From  Proposition 2.6.21 in \cite{Baczynski_Jayaram_2008} proof follows easily.
\end{enumerate}
\end{proof}
\subsection{f-generated implications and the equation (\ref{eqn:IE}).}
\begin{definition}[\cite{Baczynski_Jayaram_2008}, Definition 3.1.1]
  Let $f: [0,1] \longrightarrow [0,\infty ]$ be a strictly decreasing and continuous function with $f(1)=0$. The function $I: [0,1]^{2} \longrightarrow [0,1]$ defined by
	\begin{align}
 \label{eqn:F_Imp_Defn}
	I(x,y)=f^{-1}(x \cdot f(y)) \ ,  \qquad x,y \in [0,1] \ ,
\end{align}
  with the understanding $ 0 \cdot \infty =0 $, is called an \emph{f-implication}. If $I$ is an $f$-implication then it is denoted by $I_{f}$. The family of  $f$-impllications will be denoted by $ \mathbb{I_F}.$
\end{definition}

\begin{theorem}
No $f$ generated implication $I$ satisfies the equation (\ref{eqn:IE}).
\end{theorem}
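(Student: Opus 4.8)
The plan is to show that the defining equation \eqref{eqn:IE} forces an inconsistency for any $f$-implication. First I would record the two basic identities that every $f$-implication satisfies: from the definition $I_f(x,y)=f^{-1}(x\cdot f(y))$, putting $x=1$ gives $I_f(1,y)=f^{-1}(f(y))=y$, so every $f$-implication has \eqref{eqn:NP}; and putting $y=1$ gives $I_f(x,1)=f^{-1}(x\cdot 0)=f^{-1}(0)=1$, since $f(1)=0$. These, together with strict monotonicity and continuity of $f$ (hence of $f^{-1}$ on its domain), are the only facts I expect to need.

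The strategy is to substitute a convenient pair $(x,y)$ into \eqref{eqn:IE} and derive a contradiction with the strict decrease of $f$. The natural choice is to take $x<y$, so that we examine $I(I(y,x),I(x,y))=I(x,y)$ with $y>x$. I would compute the inner values explicitly: $I(x,y)=f^{-1}(x\,f(y))$ and $I(y,x)=f^{-1}(y\,f(x))$, and then expand the left-hand side as
\begin{equation*}
I\bigl(I(y,x),I(x,y)\bigr)=f^{-1}\!\Bigl(I(y,x)\cdot f\bigl(I(x,y)\bigr)\Bigr)=f^{-1}\!\Bigl(f^{-1}(y\,f(x))\cdot x\,f(y)\Bigr).
\end{equation*}
Setting this equal to $I(x,y)=f^{-1}(x\,f(y))$ and applying $f$ to both sides, \eqref{eqn:IE} becomes the scalar equation $f^{-1}(y\,f(x))\cdot x\,f(y)=x\,f(y)$. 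The cleanest route is then to pick a point where $x\,f(y)\neq 0$ — e.g. any $0<x<y<1$, which makes $f(y)>0$ (since $f$ is strictly decreasing with $f(1)=0$) and $x>0$ — so that we may cancel the factor $x\,f(y)$ to obtain $f^{-1}(y\,f(x))=1$, i.e. $y\,f(x)=f(1)=0$. But for such a point $f(x)>0$ and $y>0$, giving $y\,f(x)>0$, a contradiction.

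The main obstacle, and the place where care is required, is the boundary/degeneracy bookkeeping: one must make sure the chosen test point avoids the conventions $0\cdot\infty=0$ and the value $f(0)$ possibly being $+\infty$, and that all quantities lie in the domain where $f^{-1}$ is genuinely the inverse of $f$ (i.e. the arguments stay in $[0,f(0)]$). Choosing an interior pair $0<x<y<1$ sidesteps these issues, since then $f(x),f(y)\in(0,\infty)$ are finite and positive and every intermediate argument is a legitimate product of finite quantities, so no infinite arithmetic intervenes. Once the test point is pinned down this way, the contradiction above is immediate, and since $(x,y)$ was an admissible pair, \eqref{eqn:IE} cannot hold for any $f$-implication. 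I would close by remarking that this simultaneously shows an $f$-implication can never lie in the idempotent set of $(\mathbb{I},\triangledown)$.
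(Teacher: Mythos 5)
Your proposal is correct and follows essentially the same route as the paper's proof: expand \eqref{eqn:IE} via the $f$-generator, apply $f$ to both sides, cancel the factor $x\cdot f(y)$ at an interior point, and derive the contradiction $y\,f(x)=f(1)=0$. Your added care about the conventions $0\cdot\infty=0$ and the finiteness of $f$ on $(0,1]$ is a nice tightening of the same argument, not a different one.
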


\begin{proof}
 Let $I$ be an f-implication generated  from a strictly decreasing function  $f: [0,1] \rightarrow [0, \infty]$ s.t $f(1)=0$.  Now the equation $I(I(y,x),I(x,y))=I(x,y)$ has the following form
$$f^{-1} \left(f^{-1}(y \cdot f(x)) \cdot x \cdot f(y) \right )=f^{-1}(x \cdot f(y)), \forall x,y \in [0,1]. $$
Let $ x,y \notin \{0,1\}.$ Then we have 
$$f^{-1}(y \cdot f(x)) \cdot x \cdot f(y)=x \cdot f(y)$$ 
$$yf(x)=f(1)=0$$
$$i.e , either \ y=0 \ or \ f(x)=0.$$
$$i.e , either \ y=0 \ or \ x=1.$$
This is a contradiction. 
\end{proof}
\subsection{g-generated implications and the equation (\ref{eqn:IE}).}
\begin{definition}[\cite{Baczynski_Jayaram_2008}, Definition 3.2.1]
  Let $ g: [0,1] \longrightarrow [0, \infty ]$ be a strictly increasing and continuous function with $g(0)=0.$ The function $I: [0,1]^{2} \rightarrow [0,1]$ defined by $$I(x,y)=g^{(-1)}\left(\frac{1}{x} \cdot g(y)\right), \qquad x,y \in [0,1] \ ,$$ with the understanding $ \frac{1}{0}= \infty $ and $ \infty \cdot 0 = \infty $, is called a \emph{g-generated implication}, where the function $g^{(-1)}$ is the pseudo inverse of $g$ given by 
	
	$$g^{(-1)}(x)=\begin{cases}
g^{-1}(x), & \text{if $ x \in [0, \ g(1)]$} \ ,\\
1, & \text{if $ x \in [g(1),  \ \infty ]$} \ .
\end{cases}$$

\end{definition}

\begin{lemma}
\label{lemma_g_imp_FE_1}
Let $I$ be a g-generated implication. If $I$ satisfies the equation (\ref{eqn:IE}) then $g(1) < \infty$.
\end{lemma}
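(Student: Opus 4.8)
The plan is to argue by contraposition: I would assume $g(1)=\infty$ and show that then $I$ cannot satisfy \eqref{eqn:IE}, so that conversely any $g$-generated $I$ satisfying \eqref{eqn:IE} must have $g(1)<\infty$. The first step is to record what the hypothesis $g(1)=\infty$ buys us. Since $g$ is continuous and strictly increasing with $g(0)=0$, the range of $g$ is all of $[0,\infty]$, so $[0,g(1)]=[0,\infty]$ and the pseudo-inverse collapses to the genuine inverse, $g^{(-1)}=g^{-1}$ on $[0,\infty]$. Consequently $I(x,y)=g^{-1}\!\left(\tfrac{1}{x}\,g(y)\right)$ with no truncation, and $g\bigl(I(x,y)\bigr)=\tfrac{1}{x}g(y)$ exactly, as long as the arguments stay in the domain of $g^{-1}$.

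Next I would fix an interior pair $x,y\in(0,1)$, so that $I(x,y),I(y,x)\in(0,1)$ and the numbers $\tfrac{g(y)}{x}$, $\tfrac{g(x)}{y}$ are finite and positive, letting $g^{-1}$ apply cleanly. Substituting into the left-hand side of \eqref{eqn:IE}, and using $g\bigl(I(x,y)\bigr)=\tfrac{g(y)}{x}$ together with $I(y,x)=g^{-1}\!\left(\tfrac{g(x)}{y}\right)$, the left-hand side becomes $g^{-1}\!\left(\dfrac{g(y)/x}{g^{-1}(g(x)/y)}\right)$. Setting this equal to $I(x,y)=g^{-1}\!\left(\tfrac{g(y)}{x}\right)$, applying the injective map $g$ to both sides, and cancelling the common positive factor $\tfrac{g(y)}{x}$, reduces \eqref{eqn:IE} to the single condition $g^{-1}\!\left(\tfrac{g(x)}{y}\right)=1$. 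This mirrors the reduction already carried out for $f$-generated implications, so the algebra is routine.

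The contradiction then comes from monotonicity. Because $g$ is strictly increasing with $g(1)=\infty$, every $x<1$ satisfies $g(x)<\infty$, whence $\tfrac{g(x)}{y}$ is a finite number strictly below $g(1)=\infty$; since $g^{-1}(t)=1$ forces $t=g(1)=\infty$, we get $g^{-1}\!\left(\tfrac{g(x)}{y}\right)<1$, contradicting the reduced condition. Thus \eqref{eqn:IE} fails for every interior pair $(x,y)$, which proves the contrapositive. I expect the delicate point to be bookkeeping on the domains: one must verify that the hypothesis $g(1)=\infty$ is precisely what makes $g^{(-1)}$ equal to the true inverse everywhere, so that no truncation at $1$ can rescue the equation. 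This is also the conceptual reason the argument does not extend to $g(1)<\infty$, where the truncation allows $g^{(-1)}\!\left(\tfrac{g(x)}{y}\right)=1$ to hold.
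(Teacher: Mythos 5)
Your proposal is correct and follows essentially the same route as the paper's own proof: assume $g(1)=\infty$ so that $g^{(-1)}=g^{-1}$, evaluate \eqref{eqn:IE} at an interior pair $x,y\in(0,1)$, apply $g$ and cancel the factor $\tfrac{1}{x}g(y)$ to reduce the equation to $g^{-1}\bigl(\tfrac{1}{y}g(x)\bigr)=1$, and then derive the contradiction that this would force $x=1$ or $y=0$. Your extra bookkeeping on why the truncation disappears and why the interior arguments keep everything finite is a welcome clarification but not a different argument.
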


 \begin{proof}
 Let $I$ be a g-generated implication. Let us assume that $I$ satisfies the equation (\ref{eqn:IE}). Suppose that $g(1)=\infty$. i.e., $g^{(-1)}=g^{-1}$.
 
 Let $x,y \in (0,1)$. Now the equation $I(I(y,x),I(x,y))=I(x,y)$ will imply
 $$g^{-1}\left (\frac{1}{g^{-1}(\frac{1}{y}\cdot g(x))} \cdot \frac{1}{x} \cdot g(y)\right)=g^{-1}\left (\frac{1}{x} \cdot g(y)\right)$$
 $$i.e., \frac{1}{g^{-1} \left (\frac{1}{y}\cdot g(x) \right)} \cdot \frac{1}{x} \cdot g(y)=\frac{1}{x} \cdot g(y)$$
 $$i.e., g(1)=\infty =\frac{1}{y} \cdot g(x)$$
 $$i.e., either \ g(x) = \infty \ or \ y=0 $$
 $$i.e., either \ x=1 \ or \ y=0$$ which is a contradiction. Thus $g(1) < \infty$.
 \end{proof}
 
 \begin{lemma}
 \label{lemma_g_gen_imp_g_1_FE}
 If  a $g-$ generated fuzzy implication $I_{g}$ satisfies the equation (\ref{eqn:IE}), then the $g$-generated fuzzy implication $I_{g_{1}}$ also satisfies the equation (\ref{eqn:IE}) where $\displaystyle g_{1}(x)= \frac{g(x)}{g(1)}, \forall \ x \in [0,1].$  
 \end{lemma}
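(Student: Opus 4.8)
The plan is to prove something slightly stronger than stated, namely that $I_{g_1}$ and $I_g$ are in fact the \emph{same} function; the conclusion of the lemma then follows at once, since if two implications coincide and one of them satisfies \eqref{eqn:IE}, so does the other. The underlying principle is that a $g$-generated implication is invariant under multiplication of its generator by a positive constant, and $g_1 = g/g(1)$ is exactly such a rescaling.

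First I would check that the rescaling is legitimate. Since $I_g$ satisfies \eqref{eqn:IE}, Lemma \ref{lemma_g_imp_FE_1} gives $g(1) < \infty$. As $g$ is strictly increasing with $g(0)=0$, we also have $g(1) > g(0) = 0$, so the constant $c := g(1)$ lies in $(0,\infty)$. Consequently $g_1 = g/c$ is a bona fide $g$-generator: it is continuous, strictly increasing, and satisfies $g_1(0)=0$; moreover $g_1(1) = 1$.

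Next I would relate the two pseudo-inverses. From $g_1 = g/c$ one reads off $g_1^{-1}(t) = g^{-1}(ct)$ on the appropriate range, and a short case analysis against the definition of the pseudo-inverse, splitting at the value $g_1(1) = 1$ (equivalently $g(1) = c$), yields the clean identity
$$
g_1^{(-1)}(t) = g^{(-1)}(ct), \qquad t \in [0,\infty],
$$
with the conventions $\tfrac{1}{0} = \infty$ and $\infty \cdot 0 = \infty$ applied exactly as in the definition. Substituting this into the defining formula then gives, for all $x,y \in [0,1]$,
$$
I_{g_1}(x,y) = g_1^{(-1)}\!\left(\frac{1}{x}\, g_1(y)\right) = g^{(-1)}\!\left(c \cdot \frac{1}{x} \cdot \frac{g(y)}{c}\right) = g^{(-1)}\!\left(\frac{1}{x}\, g(y)\right) = I_g(x,y),
$$
so that $I_{g_1} = I_g$ identically, whence $I_{g_1}$ inherits \eqref{eqn:IE} from $I_g$.

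The only point demanding real care, and hence the main obstacle, is verifying the pseudo-inverse identity $g_1^{(-1)}(t) = g^{(-1)}(ct)$ rigorously across the boundary $t = 1$ and at the extreme arguments $t = 0$ and $t = \infty$, where the conventions $\tfrac{1}{0}=\infty$ and $\infty \cdot 0 = \infty$ intervene; in particular one should confirm the degenerate cases $x=0$ (so $\tfrac1x = \infty$) and $y=0$ separately to ensure both sides of the substitution evaluate to $1$. Once this bookkeeping is settled, the equality $I_{g_1}=I_g$ is a one-line computation and the lemma is immediate.
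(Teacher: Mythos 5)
Your proposal is correct and follows essentially the same route as the paper: the paper's proof simply cites the known fact that $I_{g} = I_{g_1}$ whenever $g_1 = c\cdot g$ for some $c \in (0,\infty)$, which is exactly the identity you establish. You additionally verify the details the paper leaves implicit (that $g(1)<\infty$ via Lemma \ref{lemma_g_imp_FE_1}, so the rescaling constant is legitimate, and the pseudo-inverse computation), which makes your version more self-contained but not a different argument.
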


 \begin{proof} 
We know that if for some $c \in (0, \infty)$, $g_{1}(x)= c \cdot g(x), \forall \ x \in [0,1],$ then $I_{g}=I_{g_{1}}.$ Thus proof completes.
 \end{proof}

\begin{lemma}
If $I$ is a g-generated implication with (IP), then $I$ satisfies the equation (\ref{eqn:IE}).
\end{lemma}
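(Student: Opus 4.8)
The plan is to reduce the claim to the same \eqref{eqn:IP}-plus-\eqref{eqn:NP} case analysis that already appears in the proof of Theorem~\ref{thm_FE_R_imp}. The only new ingredient I need is that every $g$-generated implication satisfies \eqref{eqn:NP}; once that is in hand, the hypothesis \eqref{eqn:IP} lets me close the argument almost verbatim. So my first step would be to verify \eqref{eqn:NP} for $I$ by a direct computation. Setting $x=1$ in the defining formula gives $I(1,y)=g^{(-1)}\bigl(g(y)\bigr)$, and since $g(y)\in[0,g(1)]$ for every $y\in[0,1]$, the pseudo-inverse $g^{(-1)}$ coincides with the genuine inverse $g^{-1}$ on this range. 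Hence $I(1,y)=g^{-1}(g(y))=y$, so $I$ has \eqref{eqn:NP}.

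Next I would record two elementary consequences that hold for any $I\in\mathbb{I}$ satisfying \eqref{eqn:IP}. By \eqref{eqn:I2} we have $I(x,y)\ge I(x,x)=1$ whenever $x\le y$, so $I(x,y)=1$ on $\{x\le y\}$; and by \eqref{eqn:I1} we have $I(a,1)\ge I(1,1)=1$, so $I(a,1)=1$ for every $a\in[0,1]$. With these two facts the verification of \eqref{eqn:IE} splits into the usual two cases. If $x\le y$, then $I(x,y)=1$ and therefore $I(I(y,x),I(x,y))=I(I(y,x),1)=1=I(x,y)$. If $x>y$, then $y\le x$ forces $I(y,x)=1$, so $I(I(y,x),I(x,y))=I(1,I(x,y))=I(x,y)$ by \eqref{eqn:NP}. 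This establishes \eqref{eqn:IE} in all cases.

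The argument becomes routine the moment \eqref{eqn:NP} is available, so the only point demanding genuine care is that first step: one must treat the pseudo-inverse correctly and confirm that the argument $g(y)$ really lands in the interval $[0,g(1)]$ on which $g^{(-1)}=g^{-1}$, together with the boundary conventions $\tfrac{1}{0}=\infty$ and $\infty\cdot 0=\infty$ that govern the values at $x=0$. I do not anticipate any serious obstacle; this lemma is simply the $g$-implication instance of the general principle that \eqref{eqn:IP} together with \eqref{eqn:NP} implies \eqref{eqn:IE}, exactly as exploited for $R$-implications in Theorem~\ref{thm_FE_R_imp} and in case~(iii) of the $(S,N)$-implication theorem above.
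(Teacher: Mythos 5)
Your proposal is correct and follows essentially the same route as the paper: the paper's proof simply invokes the known fact that every $g$-generated implication satisfies \eqref{eqn:NP} and then appeals to the \eqref{eqn:IP}-plus-\eqref{eqn:NP} case analysis already used for $R$-implications in Theorem~\ref{thm_FE_R_imp}. You merely make both ingredients explicit --- verifying \eqref{eqn:NP} via the pseudo-inverse and writing out the two cases --- which the paper leaves as citations to known results.
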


 \begin{proof}
 Let $I$ be a g-generated implication with (IP). We know that every g-generated implication has (NP). Thus every g-generated implication with (IP) satisfies the equation (\ref{eqn:IE}).
\end{proof}

 \begin{example} 
 $I_{GG}$ is a g-generated implication with (IP). It satisfies the equation (\ref{eqn:IE}).
\end{example}

 \begin{theorem}
 \label{thm_FE_gen}
 Let $I_{g}$ be a $g$-generated fuzzy implication satisfying the equation (\ref{eqn:IE}). Then $g(x)= g(1) \cdot x, $ for all $x \in [0,1].$
 \end{theorem}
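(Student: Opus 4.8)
The plan is to first normalize the generator and then dissect \eqref{eqn:IE} according to where the pseudo-inverse $g^{(-1)}$ is saturated. By Lemma \ref{lemma_g_imp_FE_1} the hypothesis forces $g(1)<\infty$, and by Lemma \ref{lemma_g_gen_imp_g_1_FE} the implication generated by $g_{1}(x)=g(x)/g(1)$ again satisfies \eqref{eqn:IE}; since $I_{g}=I_{g_{1}}$ and the target conclusion $g(x)=g(1)\,x$ is equivalent to $g_{1}(x)=x$, I may assume from the outset that $g(1)=1$. Then $g\colon[0,1]\to[0,1]$ is a continuous strictly increasing bijection, so $g^{(-1)}$ agrees with $g^{-1}$ on $[0,1]$ and is constantly $1$ on $[1,\infty]$, and the goal becomes to show $g=\mathrm{id}$.

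Next I would record the two facts that drive the case analysis: for $u,v\in(0,1)$,
\[
I(u,v)=
\begin{cases}
g^{-1}\left(g(v)/u\right), & \text{if } g(v)\le u,\\
1, & \text{if } g(v)> u,
\end{cases}
\]
and, since every $g$-generated implication satisfies \eqref{eqn:NP}, $I(1,v)=v$. Writing $a=I(y,x)$ and $b=I(x,y)$, equation \eqref{eqn:IE} reads $I(a,b)=b$. I would then split on the values of $a$ and $b$. If $a=1$ (equivalently $g(x)\ge y$), the left-hand side is $I(1,b)=b$ by \eqref{eqn:NP}, so the identity holds automatically; if $b=1$ (equivalently $g(y)\ge x$), then $I(a,1)=g^{(-1)}(1/a)=1=b$ and it holds again. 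Hence the only configuration carrying information is $g(x)<y$ and $g(y)<x$, in which $a=g^{-1}(g(x)/y)$ and $b=g^{-1}(g(y)/x)$ both lie in $(0,1)$.

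The heart of the argument is this genuine case. Here I would substitute $g(b)=g(y)/x$ and the value of $a$ into $I(a,b)=g^{(-1)}\!\big(g(b)/a\big)$ and compare with $b$. Because $a<1$ we have $g(b)/a>g(b)$, so if $g(b)/a\le 1$ then cancelling $g^{-1}$ against $b=g^{-1}(g(b))$ forces $a=1$, whereas if $g(b)/a>1$ then $I(a,b)=1\neq b$; either way the configuration is incompatible with \eqref{eqn:IE}. Thus \eqref{eqn:IE} can hold only if no pair satisfies $g(x)<y$ and $g(y)<x$, and specializing to $y=x$ already yields $g(x)\ge x$ for every $x\in[0,1]$, i.e.\ the lower bound $g(x)\ge g(1)\,x$.

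I expect the main obstacle to be precisely the passage from this inequality to the claimed equality. The case analysis delivers $g(x)\ge g(1)\,x$ cleanly, but the matching upper bound does not fall out of the same cancellation, since the saturated branch of $g^{(-1)}$ absorbs exactly the information one would need in the configurations where $a=1$ or $b=1$. To close the gap I would search for a configuration driving the \emph{outer} application of $I$ into its non-saturated branch while keeping $b<1$ fixed and pushing $a$ to the threshold, so as to extract a pointwise identity for $g$ rather than a one-sided estimate. Establishing that this squeezes $g(x)$ back down to $x$ is the delicate step on which the whole theorem turns, and it is where I would concentrate the effort.
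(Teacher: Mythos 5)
Your case analysis is correct as far as it goes, and it is in fact more careful than the paper's own proof: after normalizing to $g(1)=1$ you correctly isolate the only informative configuration $g(x)<y$ and $g(y)<x$, correctly show it is incompatible with \eqref{eqn:IE}, and correctly observe that this yields only the one-sided bound $g(x)\ge x$. But the ``delicate step'' you postpone cannot be carried out, because the theorem as stated is \emph{false}, and your own analysis essentially proves this. Your three cases show that \eqref{eqn:IE} holds if and only if there is no pair $(x,y)$ with $g(x)<y$ and $g(y)<x$ (the configurations $a=1$ and $b=1$ hold automatically, by \eqref{eqn:NP} and by saturation of $g^{(-1)}$ respectively), and for an increasing $g$ this is equivalent to $g\ge\mathrm{id}$: if $g\ge\mathrm{id}$ and $g(x)<y$, then $g(y)\ge y>g(x)\ge x$, so no bad pair can occur, while a point with $g(x)<x$ gives the bad pair $(x,x)$. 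Thus \emph{every} normalized generator with $g\ge\mathrm{id}$ solves \eqref{eqn:IE}. Concretely, take $g(x)=\sqrt{x}$, so that $I_g(x,y)=\min(y/x^{2},1)$ for $x>0$ and $I_g(0,y)=1$. Having both $I_g(x,y)<1$ and $I_g(y,x)<1$ would force $x,y>0$ and then $y<x^{2}$ and $x<y^{2}$, hence $y<x^{2}<y^{4}\le y$, which is impossible; in every other configuration \eqref{eqn:IE} holds because $I_g(1,v)=v$ and $I_g(u,1)=1$. So this $I_g$ satisfies \eqref{eqn:IE} although $g(x)=\sqrt{x}\neq g(1)\cdot x$, contradicting Theorem \ref{thm_FE_gen} (and hence also Theorem \ref{thm:I_g:complete_sol}).

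The reason the paper ``succeeds'' precisely where you stopped is that its proof commits exactly the error your bookkeeping avoids. Putting $x=y$ in the functional equation, the paper arrives at $g_{1}^{(-1)}\bigl(\tfrac{1}{x}\cdot g_{1}(x)\bigr)=1$, which is nothing but your inequality $g_{1}(x)\ge x$, and then cancels the (pseudo-)inverse to conclude $\tfrac{1}{x}\cdot g_{1}(x)=g_{1}(1)=1$. That cancellation is invalid: $g_{1}^{(-1)}$ is constantly $1$ on $[g_{1}(1),\infty]$, so $g_{1}^{(-1)}(z)=1$ only yields $z\ge 1$, not $z=1$; for $g_{1}(x)=\sqrt{x}$ the cancelled identity is simply false. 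So your proposal should not be read as a failed proof of the stated theorem but as a correct derivation of what is actually true: combined with Lemma \ref{lemma_g_imp_FE_1}, your method gives the characterization that $I_g$ satisfies \eqref{eqn:IE} if and only if $g(1)<\infty$ and $g(x)\ge g(1)\cdot x$ for all $x\in[0,1]$, of which the paper's claimed conclusion $g(x)=g(1)\cdot x$ is only a special case.
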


 \begin{proof}
 Let $I_{g}$ be a $g$-generated fuzzy implication satisfying the equation (\ref{eqn:IE}). From the lemma \ref{lemma_g_imp_FE_1}, it follows that $g(1) < \infty$.
 Let us define $\displaystyle g_{1}(x)= \frac{g(x)}{g(1)}.$ Clearly $g_{1}$ is also a $g$-generator and $I_{g}(x,y)=I_{g_{1}}(x,y)$, for all $x,y \in [0,1].$ Since $I_g$ satisfies the equation (\ref{eqn:IE}), $I_{g_{1}}$ also satisfies the same equation (\ref{eqn:IE}) from the lemma \ref{lemma_g_gen_imp_g_1_FE}. Moreover $I_{g_{1}}$ has the following form
 $$I_{g_{1}}(x,y)= g_{1}^{-1}\left (\frac{1}{x} \cdot g_{1}(y) \right ), \qquad x, y \in [0,1]$$
  Now, the equation  $$I_{g_{1}} (I_{g_{1}}(y,x), I_{g_{1}}(x,y)  )=I_{g_{1}}(x,y)$$ becomes
  \begin{equation}
  \label{eqn_g imp_FE}
  g_{1}^{-1} \left (\frac{1}{g_{1}^{-1}(\frac{1}{y} \cdot g_{1}(x))} \cdot \frac{1}{x} \cdot g_{1}(y)  \right )= g_{1}^{-1} \left(\frac{1}{x} \cdot g_{1}(y) \right ), \qquad x,y \in [0,1].
  \end{equation}
Let $x=y$ in the equation (\ref{eqn_g imp_FE}). Then the equation (\ref{eqn_g imp_FE}) becomes
$$g_{1}^{-1} \left(\frac{1}{g_{1}^{-1}(\frac{1}{x} \cdot g_{1}(x))} \cdot \frac{1}{x} \cdot g_{1}(x)  \right)= g_{1}^{-1} \left(\frac{1}{x} \cdot g_{1}(x) \right ), \qquad x \in [0,1]$$
$$i.e., \frac{1}{g_{1}^{-1} \left (\frac{1}{x} \cdot g_{1}(x) \right )} \cdot \frac{1}{x} \cdot g_{1}(x) = \frac{1}{x} \cdot g_{1}(x)$$
$$i.e., g_{1}^{-1} \left(\frac{1}{x} \cdot g_{1}(x) \right) =1$$
$$i.e., \frac{1}{x} \cdot g_{1}(x) =g_{1}(1)=1$$
$$i.e., g_{1}(x)=x \ or \ g(x)=g(1) \cdot x, \qquad x \in [0,1].$$
\end{proof}
 
 \begin{lemma}
 \label{lemma_g_imp_FE}
 Let $I_{g}$ be any $g$-generated fuzzy implication satisfying the equation (\ref{eqn:IE}). Then $g(1) < \infty$ and $g(x)=g(1) \cdot x, $ for all $x \in [0,1].$
 \end{lemma}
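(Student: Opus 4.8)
The plan is to recognize that this lemma is simply the conjunction of two results already established earlier in this subsection, namely Lemma~\ref{lemma_g_imp_FE_1} and Theorem~\ref{thm_FE_gen}. Both of those results take as their sole hypothesis that the $g$-generated implication $I_g$ satisfies \eqref{eqn:IE}, which is exactly the hypothesis supplied here. So the entire argument reduces to invoking each of them in turn and conjoining their conclusions.

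Concretely, first I would observe that $I_g$ satisfies \eqref{eqn:IE} by assumption, so Lemma~\ref{lemma_g_imp_FE_1} applies verbatim and yields $g(1) < \infty$. This handles the finiteness half of the claim. Next, with the same hypothesis in force, Theorem~\ref{thm_FE_gen} applies directly and gives $g(x) = g(1)\cdot x$ for all $x \in [0,1]$, which is the linearity half. Putting the two together establishes both assertions simultaneously, completing the proof.

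I expect there to be no genuine obstacle here: this statement is a collecting or summarizing lemma, packaging the finiteness conclusion of Lemma~\ref{lemma_g_imp_FE_1} and the explicit form conclusion of Theorem~\ref{thm_FE_gen} into a single convenient reference. The only thing worth checking is that the hypotheses match exactly—both cited results require precisely that $I_g$ be $g$-generated and satisfy \eqref{eqn:IE}, with no additional assumption such as \eqref{eqn:IP}—and indeed they do, so the two may be applied without any extra work. The one-line proof is therefore to cite Lemma~\ref{lemma_g_imp_FE_1} for $g(1)<\infty$ and Theorem~\ref{thm_FE_gen} for $g(x)=g(1)\cdot x$.
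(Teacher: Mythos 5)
Your proposal is correct and matches the paper's own proof exactly: the paper likewise derives this lemma by citing Lemma~\ref{lemma_g_imp_FE_1} for $g(1)<\infty$ and Theorem~\ref{thm_FE_gen} for $g(x)=g(1)\cdot x$. Your version simply spells out the hypothesis-matching that the paper leaves implicit.
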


 \begin{proof}
 Proof follows from the lemma \ref{lemma_g_imp_FE_1} and the  theorem \ref{thm_FE_gen}.
\end{proof}
 
 Here we will make use of  a theorem from MB, BJ book on Fuzzy Implications (Theorem 3.2.9)
 
 \begin{theorem}
\label{thm:I_g:andOP}
If $g$ is a $g$-generator, then the following statements are equivalent:
\begin{enumerate}
\renewcommand{\labelenumi}{(\roman{enumi})}
\item
    $I_g$ satisfies (OP).
\item
    $g(1)<\infty$ and there exists a constant $c\in(0,\infty)$ such that $g(x)=c\cdot x$ for all $x\in[0,1]$.
\item
    $I_g$ is the Goguen implication $I_{\bf GG}$.
\end{enumerate}
\end{theorem}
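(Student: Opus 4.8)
The plan is to prove the three equivalences cyclically as $(i)\Rightarrow(ii)\Rightarrow(iii)\Rightarrow(i)$, with essentially all of the work concentrated in the first implication. The key preliminary observation, which I would establish before anything else, is a clean description of the set on which $I_g$ attains the value $1$. Directly from the definition of the pseudo-inverse one has $g^{(-1)}(t)=1$ if and only if $t\geq g(1)$, so that
\begin{equation*}
I_g(x,y)=1 \iff \tfrac{1}{x}\cdot g(y)\geq g(1),
\end{equation*}
where the conventions $\tfrac{1}{0}=\infty$ and $\infty\cdot 0=\infty$ dispose of the case $x=0$ (they give $I_g(0,y)=1$ for every $y$, which is already consistent with \eqref{eqn:OP} since $0\leq y$ always, so the point $x=0$ imposes no constraint).

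For $(i)\Rightarrow(ii)$ I would first rule out $g(1)=\infty$. If $g(1)=\infty$, the displayed criterion forces $\tfrac{1}{x}g(y)=\infty$, i.e.\ $x=0$ or $g(y)=\infty$ (equivalently $y=1$); hence $I_g(x,y)=1$ only when $x=0$ or $y=1$. This already contradicts \eqref{eqn:IP} --- which is a consequence of \eqref{eqn:OP}, obtained by setting $y=x$ --- at any interior point, for instance $I_g(\tfrac12,\tfrac12)\neq 1$. Therefore $g(1)<\infty$, and writing $c:=g(1)$ we have $c\in(0,\infty)$ because $g$ is strictly increasing with $g(0)=0$. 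With $c$ finite and $x\in(0,1]$ the criterion rearranges to $I_g(x,y)=1 \iff x\leq g(y)/c$, while \eqref{eqn:OP} reads $I_g(x,y)=1\iff x\leq y$. Comparing the two, for each fixed $y$ the sets $\{x\in(0,1]:x\leq y\}$ and $\{x\in(0,1]:x\leq g(y)/c\}$ must coincide; since $g(y)/c\in[0,1]$, a threshold-matching argument (if the two bounds differed, a point strictly between them would separate the sets) forces $g(y)/c=y$. Thus $g(x)=c\cdot x$ for all $x\in[0,1]$, which is precisely $(ii)$.

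The two remaining implications are routine verifications. For $(ii)\Rightarrow(iii)$ I would substitute $g(x)=c\,x$ into the defining formula: since $g^{-1}(t)=t/c$ on $[0,c]$ and $g^{(-1)}$ truncates to $1$ above $c$, one obtains $I_g(x,y)=y/x$ when $y\leq x$ and $I_g(x,y)=1$ when $y>x$, that is, the Goguen implication $I_{\bf GG}$; the constant $c$ cancels, exactly as anticipated by Lemma~\ref{lemma_g_gen_imp_g_1_FE}. For $(iii)\Rightarrow(i)$ I would simply read off from the explicit form of $I_{\bf GG}$ that $I_{\bf GG}(x,y)=1$ holds if and only if $x\leq y$, which is \eqref{eqn:OP}, closing the cycle.

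I expect the main obstacle to be the $(i)\Rightarrow(ii)$ step, and within it the careful bookkeeping around the pseudo-inverse: one must handle the truncation at $g(1)$ together with the boundary conventions for $x=0$ and $g(y)=\infty$ precisely, because these are exactly the places where the dichotomy $g(1)<\infty$ versus $g(1)=\infty$ is resolved. Once the level-set criterion $I_g(x,y)=1\iff g(y)\geq x\,g(1)$ is secured, the argument producing linearity of $g$ is short and the rest is computation.
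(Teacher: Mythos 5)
Your proposal is correct, but there is nothing in the paper itself to compare it against: the paper does not prove this theorem at all. It imports it verbatim as a known result (Theorem 3.2.9 of the cited monograph of Baczy\'{n}ski and Jayaram), introduced only by the remark that the author ``will make use of'' that theorem. Your blind attempt therefore supplies a self-contained proof of a statement the paper merely quotes, and the proof is sound in all its steps: the level-set characterization $I_g(x,y)=1 \iff \tfrac{1}{x}\cdot g(y)\geq g(1)$ follows correctly from the definition of the pseudo-inverse, including the boundary case $x=0$; the exclusion of $g(1)=\infty$ via failure of \eqref{eqn:IP} at an interior point is legitimate, since \eqref{eqn:OP} with $y=x$ yields \eqref{eqn:IP}; the threshold-matching argument does force $g(y)=g(1)\cdot y$ (both thresholds lie in $[0,1]$, so a separating point exists whenever they differ); and the computations for $(ii)\Rightarrow(iii)$ and $(iii)\Rightarrow(i)$ are routine and correct. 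For comparison, the monograph's own proof of the hard implication reaches linearity by a slightly different route: \eqref{eqn:IP} gives $g(x)\geq x\cdot g(1)$ for all $x$, while the converse direction of \eqref{eqn:OP} gives $g(y)< x\cdot g(1)$ for every $x>y$, whence $g(y)\leq y\cdot g(1)$ by letting $x$ decrease to $y$; your separating-point argument and this limiting argument are interchangeable, with yours having the minor advantage of not invoking any continuity or limit process. Either way, the cyclic decomposition $(i)\Rightarrow(ii)\Rightarrow(iii)\Rightarrow(i)$ with the work concentrated in the first arrow is exactly the natural structure for this result, and your write-up fills a gap the paper leaves to an external reference.
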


\begin{theorem}
\label{thm:I_g:complete_sol}
If $g$ is a $g$-generator, then the following statements are equivalent:
\begin{enumerate}
\renewcommand{\labelenumi}{(\roman{enumi})}
\item
    $I_g$ satisfies the equation (\ref{eqn:IE}).
\item
    $I_g$ is the Goguen implication $I_{\bf GG}$.
\end{enumerate}
\end{theorem}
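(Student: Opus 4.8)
The plan is to prove the two implications separately, assembling the structural facts already established for $g$-generated implications rather than computing anything new.

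For $(i)\Rightarrow(ii)$, I would start from the assumption that $I_g$ satisfies \eqref{eqn:IE} and invoke Lemma \ref{lemma_g_imp_FE}, which at once yields $g(1)<\infty$ together with the linear form $g(x)=g(1)\cdot x$ for all $x\in[0,1]$. The only small point to verify is that the constant $c:=g(1)$ lies in $(0,\infty)$: finiteness is supplied by the lemma, and positivity follows because $g$ is strictly increasing with $g(0)=0$, so $g(1)>g(0)=0$. Thus $g(x)=c\cdot x$ with $c\in(0,\infty)$, which is precisely condition (ii) of Theorem \ref{thm:I_g:andOP}; by the equivalence recorded there, $I_g$ must be the Goguen implication $I_{\bf GG}$.

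For $(ii)\Rightarrow(i)$, I would route the argument through \eqref{eqn:OP}. Assuming $I_g=I_{\bf GG}$, Theorem \ref{thm:I_g:andOP} gives that $I_g$ satisfies \eqref{eqn:OP}. Taking $x=y$ in the defining equivalence of \eqref{eqn:OP} forces $I_g(x,x)=1$ for every $x$, so $I_g$ satisfies \eqref{eqn:IP}. Finally, the earlier lemma stating that every $g$-generated implication with \eqref{eqn:IP} satisfies \eqref{eqn:IE} closes the loop, and $I_g$ satisfies \eqref{eqn:IE}.

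Since all the genuine analytic work has already been isolated—linearity forced by \eqref{eqn:IE} in Lemma \ref{lemma_g_imp_FE}, and the characterization of \eqref{eqn:OP} for $g$-generated implications in the cited Theorem \ref{thm:I_g:andOP}—I expect no real obstacle: the theorem is essentially a bookkeeping step splicing these two results together. The only place that demands any care is confirming the positivity of $c=g(1)$, so that the form $g(x)=c\cdot x$ matches exactly the hypothesis of Theorem \ref{thm:I_g:andOP} and the identification with $I_{\bf GG}$ is legitimate.
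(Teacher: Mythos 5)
Your proposal is correct, and the forward direction $(i)\Rightarrow(ii)$ coincides with the paper's proof: both splice Lemma \ref{lemma_g_imp_FE} into Theorem \ref{thm:I_g:andOP}, and your extra check that $c=g(1)$ is strictly positive (from $g$ strictly increasing with $g(0)=0$) is a small but legitimate tightening that the paper leaves implicit. Where you genuinely diverge is in $(ii)\Rightarrow(i)$. The paper steps outside the $g$-generated family altogether: it observes that $I_{\bf GG}$ is an $R$-implication (the residual of the product t-norm) and then invokes Theorem \ref{thm_FE_R_imp}, which states that every $R$-implication satisfies \eqref{eqn:IE}. You instead stay entirely inside the $g$-generated machinery: Theorem \ref{thm:I_g:andOP} gives \eqref{eqn:OP} for $I_{\bf GG}$, setting $x=y$ in \eqref{eqn:OP} yields \eqref{eqn:IP}, and the section's unnumbered lemma (a $g$-generated implication with \eqref{eqn:IP} satisfies \eqref{eqn:IE}, since it automatically has \eqref{eqn:NP}) finishes the argument. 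Each step of your chain is sound. The trade-off: the paper's route is shorter but rests on the external fact that $I_{\bf GG}$ is residual, which is nowhere verified in this paper and must be imported from the literature; your route is slightly longer but self-contained within the section's own results, reusing a lemma that would otherwise sit idle. Either is acceptable; yours arguably fits the internal logic of the section better.
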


\begin{proof}
 $(i) \Rightarrow (ii):$ Proof of this part follows from Lemma \ref{lemma_g_imp_FE} and Theorem \ref{thm:I_g:andOP}.\\
$(ii) \Rightarrow (i): $ Since $I_{g}=I_{\bf{GG}}$ is an $R$-implication, $I_{g}$ satisfies the equation (\ref{eqn:IE}) from Theorem \ref{thm_FE_R_imp}.
\end{proof}

\section{Conclusion}
The functional equation (IE) has both theoretical and applicational significance in many areas. keeping the significance of this equation, here in this work, we have taken the task of finding the solutions of (IE). Interestingly, while every R-implication satisfies (IE), it is found out  that no f-implication satisfies the equation (IE). However, application of fuzzy implications satisfying (IE) are yet to be done.


\bibliographystyle{elsarticle-harv} 
\bibliography{NRV_IE}

\end{document}